\newcommand{\e}{\mathrm{e}}
\newcommand{\R}{\mathbb{R}}
\newcommand{\C}{\mathbb{C}}
\newcommand{\N}{\mathbb{N}}
\newcommand{\diff}{\mathrm{d}}	
\newcommand{\im}{\mathrm{i}}	
\newcommand{\F}{\mathcal{F}}
\let\Re\relax
\DeclareMathOperator{\Re}{Re}
\DeclareMathOperator{\supp}{supp}	
\newcommand{\deriv}[3][]{\frac{\diff^{#1}#2}{\diff #3^{#1}}}
\newcommand{\pderiv}[3][]{\frac{\partial^{#1}#2}{\partial #3^{#1}}}
\newtheoremstyle{theorem}{}{}{\itshape}{}{\bfseries}{}{\newline}{}
\theoremstyle{theorem}
\newtheorem{theorem}{Theorem}
\newtheorem{lemma}[theorem]{Lemma}
\newtheoremstyle{example}{}{}{}{}{\bfseries}{}{\newline}{}
\theoremstyle{definition}
\newtheorem{example}[theorem]{Example}
\begin{document}

\title{A Spectral View on Slow Invariant Manifolds \texorpdfstring{\\}{} 
in Complex-Time Dynamical Systems}

\author{J\"orn~Dietrich, Dirk~Lebiedz}
\affiliation{%
Institut f\"ur Numerische Mathematik,  
Universit\"at Ulm, 
Helmholtzstra\ss{}e 20, 
89081 Ulm,
Germany
}%
\keywords{slow invariant manifold, 
normally hyperbolic invariant manifold,
complex time, 
analytic continuation,
Paley-Wiener-Schwartz theorem}
\received{December 3, 2019}

\pacs{02.30.Hq, 
	02.30.Nw, 
	05.45.-a, 
	02.30.-f} 

\date{\today}

\begin{abstract}
\noindent
	Many real-analytic flows, e.g.\ in chemical kinetics, share a multiple time scale spectral structure. 
	The trajectories of the corresponding dynamical systems are observed to bundle near so-called slow invariant manifolds (SIMs), which are usually addressed in a singular perturbation context.
	This work exploits the analytic structure of the involved vector fields and presents observations that connect one dimensional slow invariant manifolds to the imaginary-time spectral structure of Riemann surfaces in analytic continuations of dynamical systems to complex time.
\end{abstract}

\maketitle

\section{Introduction}
\noindent
We introduce analytic continuation of smooth autonomous dynamical systems from real to complex-time domain in order to study slow invariant manifolds (SIM) in terms of geometry and spectral properties of holomorphic curves, respectively embedded Riemann surfaces.\\ 
Imaginary time has been discussed in various contexts of modern physical theories such as special and general relativity, quantum mechanics and quantum field theory. 
The time-dependent Schr\"odinger equation is formally of parabolic heat-equation type in imaginary time. 
For time-independent potential energy in the quantum Hamiltonian operator (with the consequence of separability of the time-dependent Schr\"odinger equation), time-differentiation $\im\frac{\diff}{\diff t}$ can be interpreted as an energy operator. 
Wick rotation~\cite{Wick1954} is a technique bridging statistical and quantum mechanics by identifying the thermal Boltzmann factor $\frac{1}{kT}$ with imaginary time $\frac{2\pi\im t}{h}$. 
In special relativity replacing real by imaginary time makes the Minkowski metric Euclidean. 
For the classical mechanical model of a frictionless nonlinear pendulum in Hamiltonian formulation, the introduction of imaginary time opens relations between periodic solutions and the theory of elliptic integrals and functions (see~\cite{Ochs2011,Romero2018} for recent accounts). \\
Obviously, in several contexts there is a relation between physical properties, corresponding modeling techniques, and the formal introduction of imaginary time. 
However, to the best of our knowledge, analytic continuation to complex time resulting in holomorphic dynamical systems with Riemann surfaces instead of trajectories as solutions while embedding the original real dynamical system has not been used systematically to study slow invariant manifolds. 
In particular, we show here that a complex analysis view on flows of dynamical systems has the potential to shed new light on special phase space structures, the slow invariant manifolds. 
We exploit the global spectral structure of imaginary time trajectories and present results for three well known example models which have been analyzed using Fenichel's singular perturbation approach. The rich mathematics of Riemann surfaces provides significant potential for deeper studies.

\section{Invariant manifolds of slow motion}
\noindent
In systems with multiple time scales one typically observes bundling behavior of trajectories near invariant manifolds of slow motions. 
This is the basis of some model reduction approaches (see references in~\cite{Heiter2018}).
While there is no obvious natural definition of a SIM for general nonlinear systems, several competing eligible approaches have followed, most of which are based on Fenichel's perturbation theory for invariant manifolds~\cite{Fenichel1971,Fenichel1979}. 
The core idea is to introduce a parameter $\varepsilon$ into the generating vector field with a sufficiently smooth dependence in way that produces the original vector field for some absolutely small value $\varepsilon=\varepsilon_0\neq0$ and a system with annihilated slow modes for $\varepsilon=0$. 
Put in simple terms, Fenichel's theorem then ensures that for sufficiently small perturbations of $\varepsilon$ and some compact, normally hyperbolic submanifold $\mathcal{S}_0$ (with boundary) consisting of fixed points of the unperturbed system ($\varepsilon=0$), there exists a nearby diffeomorphic and normally hyperbolic submanifold $\mathcal{S}_{\varepsilon}$, overflowing invariant under the perturbed system and with the same attraction behavior in normal direction. 
This theorem is of great importance for the theory of dynamical systems and justifies the equivalent concepts of `center-like manifolds'~\cite{Sakamoto1990} or `slow (invariant) manifolds' (e.g.~\cite{Jones1995}), which are not unique in general but very close to each other~\cite{Tikhonov1952}. 
Important generalizations include the persistence of noncompact normally hyperbolic invariant manifolds (NHIMs) in Euclidean spaces~\cite{Sakamoto1990} and Riemannian manifolds of bounded geometry~\cite{Eldering2012} and of compact, possibly infinite-dimensional NHIMs in Banach spaces~\cite{Bates2008}. 
Similar concepts as for example `slow spectral submanifolds'~\cite{Haller2016} address the non-uniqueness by adding e.g.\ non-resonance constraints. \\
\noindent
In~\cite{Heiter2018} Heiter and Lebiedz promoted a differential geometric approach (originally rooted in the formulation of a variational problem~\cite{Lebiedz2004,Lebiedz2011a}) in order to avoid the artificially incorporated parameter $\varepsilon$ and provide an intrinsic geometric setting to obtain in the ideal case a local criterion. \\
\noindent
In the following, we will refer to the term slow invariant manifold (SIM) as NHIMs with purely stable normal direction. 
Classically, these SIMs are constructed in a unique way by using their persistence property. 
The idea is to asymptotically expand the unique SIM of a similar vector flow -- the so-called \emph{critical manifold} -- in the direction of perturbation. 
This results in a SIM that can be approximated by a power series in the perturbation parameter $\varepsilon$. 
While there is no general theorem providing uniqueness, the current article restricts to systems for which the resulting SIM is uniquely given under the assumption of analyticity. 

\section{Analytic Continuation}
\noindent
Consider a real analytical dynamical system given by the ordinary differential equation (ODE) initial value problems
\begin{equation}\label{ODE_iwp}
	\deriv{}{t} z = F(z), z(0)=z_0 \in \R^n
\end{equation}
whereas $F:U\to TU$ is an complete and (real) analytic vector field on a nonempty open subset $U\subset\R^n$, $TU$: tangent bundle.
This real analytic vector field can be continued to a complex analytic vector field on a complex open subset $\tilde U\subset\C^n$ with $U\subset\tilde U$. 
\begin{lemma}
	For every real analytic function $F:U \to \R^n$ there is an complex open set $\tilde U 
	\subset\C^n, U\subset\tilde U$ and a holomorphic function $\tilde F:\tilde U\to\C^n$ with
	$\tilde F|_U = F$ and $\tilde F$ is unique on $\tilde U$. 
\end{lemma}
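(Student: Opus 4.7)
The plan is to construct $\tilde F$ locally via complexified Taylor series at each point of $U$, glue these local pieces together with the identity theorem for holomorphic functions of several variables, and derive uniqueness from the same identity theorem. I would first fix an arbitrary $x_0 \in U$. By real analyticity, each component $F_i$ admits a convergent multivariate power series representation $F_i(x) = \sum_\alpha c^{(i)}_{\alpha,x_0} (x-x_0)^\alpha$ on some real open polydisk of radius $r(x_0) > 0$. The key observation is that the same coefficients define an absolutely convergent series on the complex polydisk $\tilde P_{x_0} := \{z \in \C^n : |z_k - (x_0)_k| < r(x_0)\}$, by the usual Cauchy--Hadamard argument applied variable by variable. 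Summing componentwise gives a holomorphic map $\tilde F^{(x_0)} : \tilde P_{x_0} \to \C^n$ which restricts to $F$ on $\tilde P_{x_0} \cap U$.

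Next I would set $\tilde U := \bigcup_{x_0 \in U} \tilde P_{x_0}$, an open subset of $\C^n$ containing $U$, and attempt to define $\tilde F(z) := \tilde F^{(x_0)}(z)$ for any choice of $x_0$ with $z \in \tilde P_{x_0}$. The main obstacle is well-definedness on overlaps: if $z \in \tilde P_{x_0} \cap \tilde P_{x_1}$, the two candidate values must coincide. On the intersection $W := \tilde P_{x_0} \cap \tilde P_{x_1}$, both $\tilde F^{(x_0)}$ and $\tilde F^{(x_1)}$ are holomorphic and agree with $F$ on $W \cap \R^n$. Since $W$ is convex (an intersection of polydisks), $W \cap \R^n$ is a totally real submanifold of full real dimension in $W$ whenever $W \cap U \neq \emptyset$. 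The identity theorem then forces $\tilde F^{(x_0)} = \tilde F^{(x_1)}$ on the connected component of $W$ meeting $U$. By shrinking radii $r(x_0)$ if necessary so that each nonempty overlap is connected and meets $\R^n$ (for instance, by replacing $\tilde P_{x_0}$ with a narrow tube around its real slice), this yields a well-defined holomorphic $\tilde F : \tilde U \to \C^n$ extending $F$.

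Uniqueness is then an immediate consequence of the identity theorem. If $\tilde F_1, \tilde F_2 : \tilde U \to \C^n$ are two holomorphic extensions, their difference is holomorphic and vanishes on $U$, which contains an open subset of $\R^n$ inside each connected component of $\tilde U$ by construction. Since those components are open and connected, and $U \cap \tilde U_{\text{comp}}$ contains a totally real submanifold of full dimension, the difference must vanish identically on every component, giving $\tilde F_1 \equiv \tilde F_2$. The only subtle point to flag in a careful write-up is that uniqueness is sensitive to the choice of $\tilde U$: it holds precisely on those connected components that intersect $U$, which is automatic for the union-of-polydisks construction. The genuine analytic content is concentrated in the gluing step and the verification that the complexified Taylor series converges on a full complex polydisk, both of which are standard but deserve explicit mention.
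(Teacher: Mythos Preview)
Your proposal is correct and follows essentially the same approach as the paper: local complexified Taylor series on polydisks, union over base points, and the identity theorem for gluing and uniqueness. The one place the paper is slightly more direct is the overlap step---rather than shrinking radii, it simply observes that if $P_{p^1}\cap P_{p^2}\neq\emptyset$ then the real part of any point in the intersection lies in $P_{p^1}\cap P_{p^2}\cap\R^n$ (since the centers are real), so the overlap automatically meets $\R^n$ and your ``shrinking'' precaution is unnecessary.
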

\begin{proof}
	For an arbitrary point $p\in U$ denote the (poly-)radius of convergence of the local power series by $r(p)\in{(\R^+)}^n$. 
	Clearly, the corresponding complex power series (denoted by $\tilde F_p$) has the same radius of convergence and defines a holomorphic function on the corresponding polydisk
	\begin{equation*}
		P_p := \{z\in\C^n : |z_k-p_k|<r_k(p), k=1,\dots,n\}. 
	\end{equation*}
	As a consequence we obtain a complex open cover $\tilde U:=\bigcup_{p\in U} P_p$ of $U$ and a family of holomorphic functions $\tilde F_p$, each defined on the corresponding polydisk. 
	In addition, it holds $\tilde F_{p^1} \equiv \tilde F_{p^2}$ on $P_{p^1} \cap P_{p^2}$ for all $p^1,p^2\in\tilde U$ because 
	\begin{equation*}
		P_{p^1} \cap P_{p^2} \neq \emptyset \Rightarrow \exists p \in U \cap P_{p^1} \cap P_{p^2}
	\end{equation*}
	and by the principle of analytic continuation we have
	\begin{align*}
		\pderiv[k]{}{z} (\tilde F_{p^1}-\tilde F_{p^2})(p) = \pderiv[k]{}{z} (F-F)(p) = 0~\forall
		k\in\N^n \\
		\Rightarrow\quad \tilde F_{p^1}-\tilde F_{p^2} \equiv 0 \text{~on~} P_{p^1} \cap P_{p^2}. 
	\end{align*}
	Therefore, $\tilde F(z) := \tilde F_{p}(z)$ for $z\in P_p$ is well-defined. 
	By the same argument $\tilde F$ is unique on $\tilde U$. 
\end{proof}
\noindent
The next theorem provides existence and uniqueness results for holomorphic complex-time flows generated by holomorphic vector fields. 

\begin{theorem}[\cite{Ilyashenko2007}]
	For any holomorphic differential equation
	\begin{equation*}
		\deriv{}{t} \tilde z = \tilde F(\tilde z), \quad \tilde U\subset\C^n, \tilde F\in
		\mathcal{O}^n(\tilde U)
	\end{equation*}
	and every $\tilde z_0\in\tilde U$ there exists a sufficiently small polydisk $P(\tilde z_0,r)$ and $\varepsilon>0$ such that a solution to the corresponding initial value problem with $\tilde z(0) = \tilde z_0$ exists for all $t\in\C: |t|<\varepsilon$ and is unique on this polydisk. 
	Denote the corresponding flow by $\varphi^t(\tilde z_0)$ or $\exp(t \tilde F) \tilde z_0$ to emphasize the generating vector field. 
\end{theorem}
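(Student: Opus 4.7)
The plan is to mimic the classical real-analytic Picard--Lindel\"of argument, the only genuinely new ingredient being that the time variable is now complex, so the integral defining the Picard operator must be taken as a complex line integral along a path in $\C$.

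First I would fix a closed polydisk $\overline{P(\tilde z_0,r)}\subset\tilde U$ and note that, by holomorphy of $\tilde F$ on $\tilde U$ and compactness of $\overline{P(\tilde z_0,r)}$, there are constants $M, L>0$ with $\|\tilde F(\tilde z)\|\le M$ and $\|\tilde F(\tilde z)-\tilde F(\tilde w)\|\le L\|\tilde z-\tilde w\|$ on the closed polydisk; the Lipschitz bound follows from boundedness of the Jacobian, which exists because $\tilde F\in\O^n(\tilde U)$, and the mean-value-type inequality on the convex polydisk. Next I would define the Picard operator
\begin{equation*}
	(T\tilde z)(t) := \tilde z_0 + \int_0^t \tilde F(\tilde z(s))\,\diff s,
\end{equation*}
where the integral is taken along the straight segment $[0,t]\subset\C$. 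Its domain is the Banach space $X_\varepsilon$ of continuous maps $\tilde z:\overline{\Delta_\varepsilon}\to\C^n$ (with $\Delta_\varepsilon:=\{t\in\C:|t|<\varepsilon\}$) equipped with the sup-norm, restricted to the closed ball $B_\varepsilon\subset X_\varepsilon$ of maps taking values in $\overline{P(\tilde z_0,r)}$.

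The key estimates then are $\|T\tilde z - \tilde z_0\|_\infty \le \varepsilon M$ and $\|T\tilde z - T\tilde w\|_\infty \le \varepsilon L\|\tilde z-\tilde w\|_\infty$, both obtained by the standard $ML$-type bound for the path integral. Choosing $\varepsilon<\min(r/M,1/L)$ guarantees that $T$ maps $B_\varepsilon$ into itself and is a strict contraction, so Banach's fixed-point theorem delivers a unique $\tilde z^\ast\in B_\varepsilon$ with $T\tilde z^\ast = \tilde z^\ast$. Differentiating this identity in $t$ yields $\diff \tilde z^\ast/\diff t = \tilde F(\tilde z^\ast)$ with $\tilde z^\ast(0)=\tilde z_0$, and uniqueness in $B_\varepsilon$ is precisely the claimed uniqueness on the polydisk.

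The one step that is not completely formal, and where I expect the main obstacle, is the upgrade from \emph{continuous} fixed point to \emph{holomorphic} solution. Here I would argue as follows: each Picard iterate $\tilde z_{k+1}:=T\tilde z_k$, started from the constant $\tilde z_0$, is holomorphic in $t\in\Delta_\varepsilon$, since the integrand $s\mapsto\tilde F(\tilde z_k(s))$ is continuous and the map $t\mapsto \int_0^t g(s)\,\diff s$ is holomorphic for any continuous $g$ (its complex derivative is $g(t)$). The contraction estimate shows that $\tilde z_k\to\tilde z^\ast$ uniformly on $\overline{\Delta_\varepsilon}$, so by the Weierstrass convergence theorem the limit is holomorphic on $\Delta_\varepsilon$. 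This subtlety -- that one must verify holomorphy of the path integral and then transfer it across the uniform limit -- is the only place where the argument departs from its real-variable template; everything else is routine.
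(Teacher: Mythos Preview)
The paper does not give its own proof of this theorem; it is quoted from the reference \cite{Ilyashenko2007} and used as a black box. So there is no in-paper argument to compare against. Your Picard--Lindel\"of approach with complex path integrals is the standard one and is essentially what one finds in the cited source; the overall architecture (compactness $\Rightarrow$ bounds $M,L$, contraction on a sup-norm ball, Weierstrass limit for holomorphy) is sound.

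There is, however, one genuinely wrong sentence in your holomorphy step. You write that ``the map $t\mapsto\int_0^t g(s)\,\diff s$ is holomorphic for any continuous $g$ (its complex derivative is $g(t)$).'' This is false: take $g(s)=\bar s$ and integrate along the segment $[0,t]$; a direct computation gives $\tfrac12|t|^2$, which is not holomorphic. What you actually need---and what you in fact have---is that the integrand is \emph{holomorphic}, by induction: $\tilde z_0$ is constant, and if $\tilde z_k$ is holomorphic on $\Delta_\varepsilon$ then so is $s\mapsto\tilde F(\tilde z_k(s))$ as a composition of holomorphic maps. On the simply connected disk $\Delta_\varepsilon$ a holomorphic integrand has a primitive, the line integral is path-independent by Cauchy's theorem, and $t\mapsto\int_0^t \tilde F(\tilde z_k(s))\,\diff s$ is holomorphic with derivative $\tilde F(\tilde z_k(t))$. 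With this correction the induction goes through, the Picard iterates are holomorphic, and the Weierstrass convergence theorem finishes the argument exactly as you describe.
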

\noindent
Obviously, the real flow generated by the (real) vector field $F$ from above coincides with the restriction of complex flow to the real space, i.e.
\begin{equation*}
	\exp(t F)z_0 = \exp(t \tilde F) z_0 \quad \forall t\in\R:|t|<\varepsilon, z_0\in U\subset\R^n.
\end{equation*}
The complex analytic trajectories provide a well-defined expansion of the real ones. 
Therefore, it seems reasonable to expect complex manifestations of real space flow characteristics. 
While such insights are of general interest, this is especially the case for slow invariant manifolds as it might provide an explicit characterization of the so far implicitly given object. 

\section{Oscillations in imaginary time}
\noindent
Studying the geometric properties of the complex integral curves, which by definition form Riemann surfaces, we observe an oscillation in imaginary time direction that closely relates to spectral properties of the generating nonlinear vector field. 
Fast time scale dynamics correspond to high-frequency and slow time scales to low-frequency oscillations. \\
\noindent
In the following we focus on linear and nonlinear dissipative systems with one globally attracting fixed point often used for demonstration purposes in SIM research. 
\begin{example}[Linear system]
	Consider the linear dynamical system given by the differential equation $\dot z = Az$ with $A\in\R^{n\times n}$ being a real diagonalizable matrix with eigenvalues $\lambda_1,\dots,\lambda_n\in\R$ counted by their multiplicity and a corresponding basis of eigenvectors $v^1,\dots,v^n\in\R^n$. 
	For this model a SIM of dimension $j$ is the span of the $j$ slowest eigenvectors. 
	The imaginary time trajectory for $t\in\R$ and $z(0)=\sum_k \alpha_k v^k, \alpha_k \in\R$ is given by
	\begin{align*}
		z(\im t) =& \exp(\im tA) \sum_{k=1}^n \alpha_k v^k \\
		=& \sum_{k=1}^n \alpha_k \cos(\lambda_k t) v^k + \im \sum_{k=1}^n \alpha_k \sin(\lambda_k t)
		v^k. 
	\end{align*}
	This corresponds to the discrete Fourier spectrum
	\begin{equation*}
		\F_t [z^j(\im t)](\xi) = \sqrt{2\pi} \sum_{k=1}^n \alpha_k v_j^k\updelta(\xi-\lambda_k), 
	\end{equation*}
	where $\updelta$ denotes the Dirac delta distribution. 
	As long as $A$ is real diagonalizable this equation characterizes exponential growth rates in real time by frequencies in imaginary time. 
\end{example}

\begin{figure*}[!htbp]
	\includegraphics[width=0.48\textwidth]{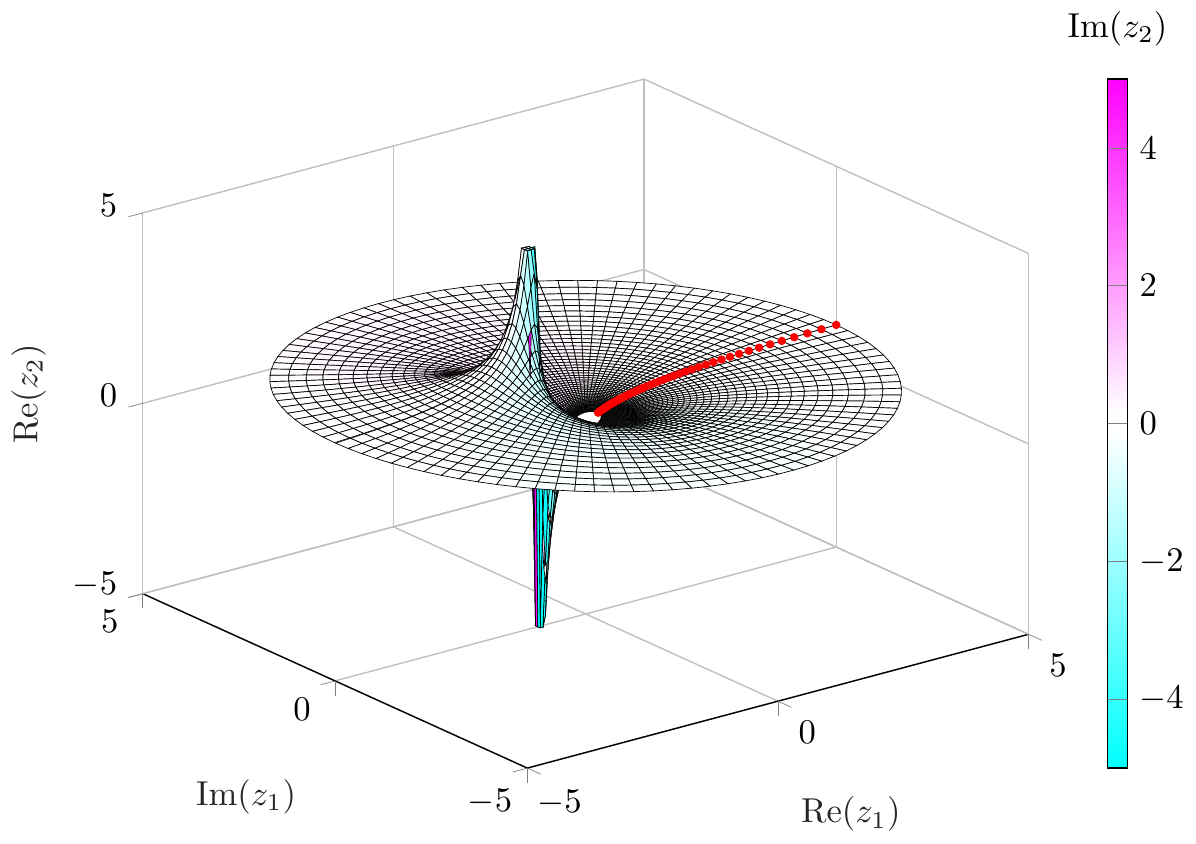}
	\includegraphics[width=0.48\textwidth]{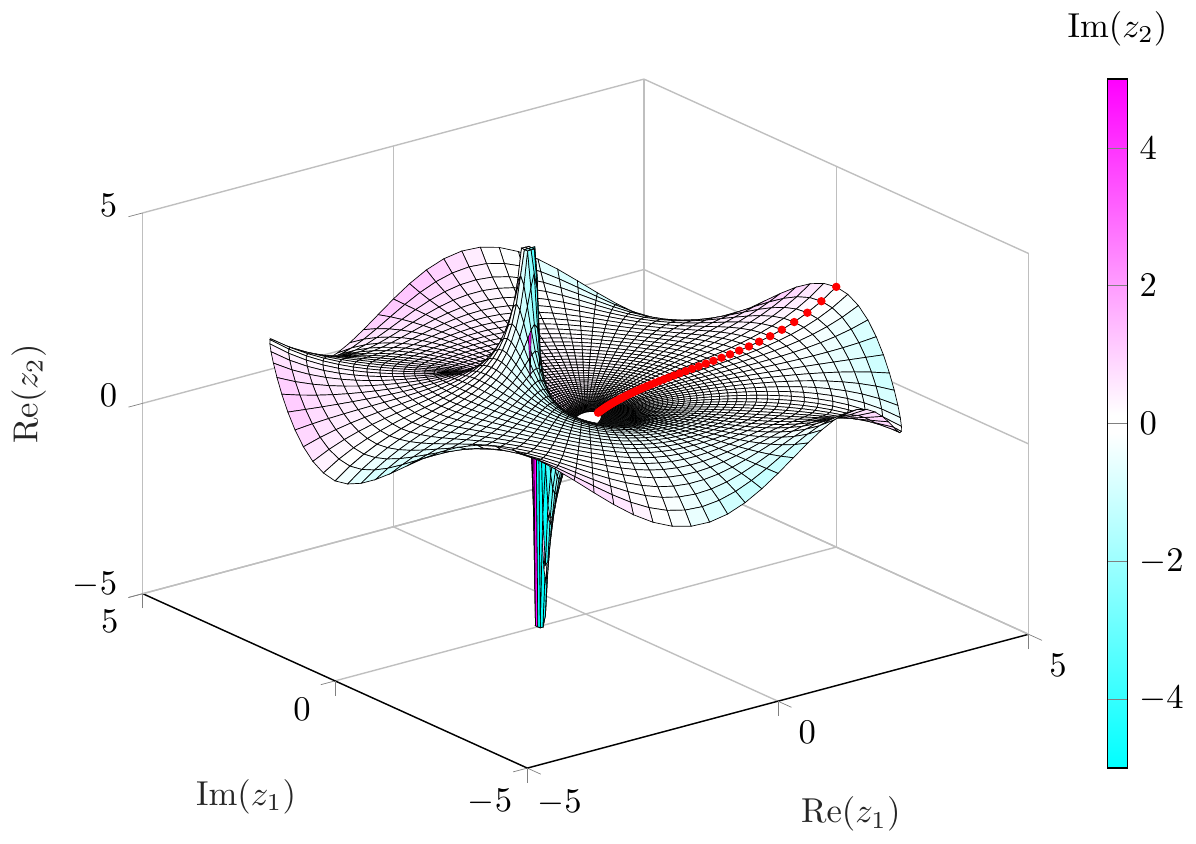}
	\caption{Davis-Skodje model: Complex time trajectories (Riemann surfaces) with initial point on (left) and off the SIM (right). The red line is the real time trajectory.}%
	\label{fig:ds_surf}
\end{figure*}

\begin{figure*}[!htbp]
	\includegraphics[width=0.48\textwidth]{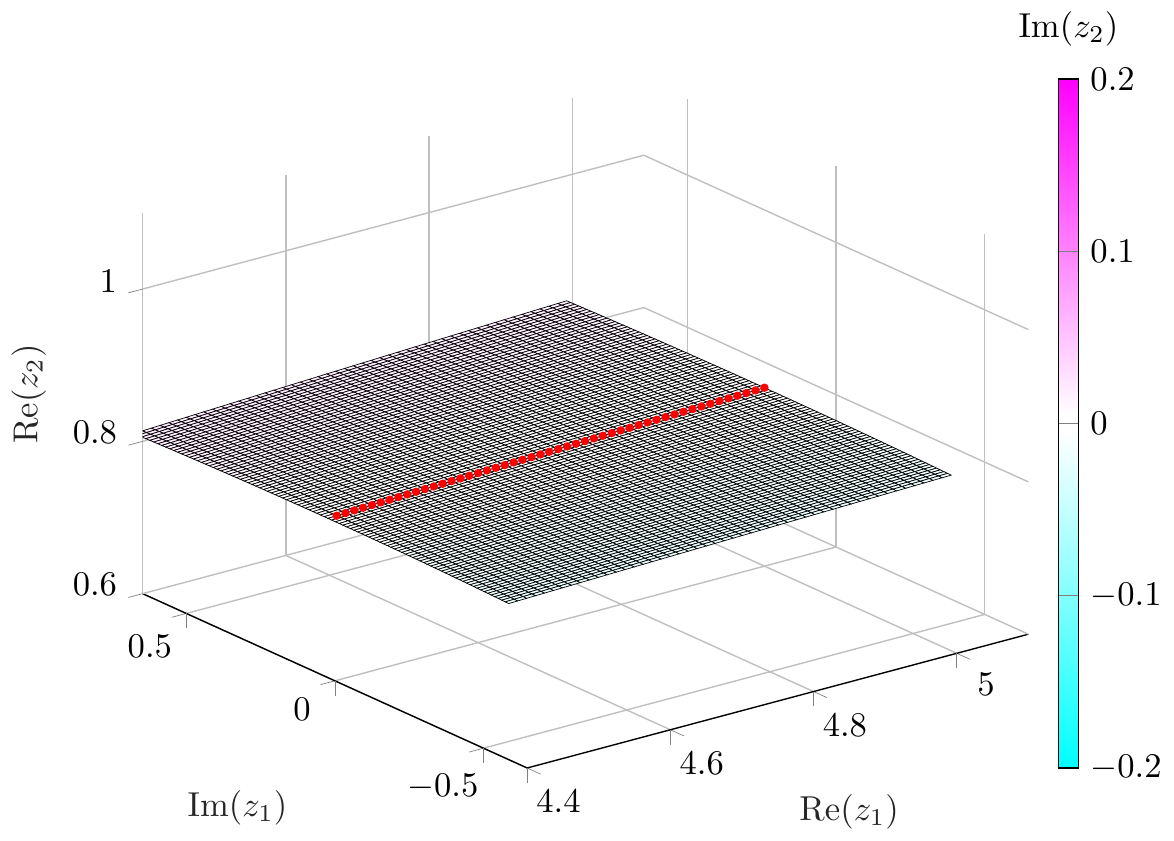}
	\includegraphics[width=0.48\textwidth]{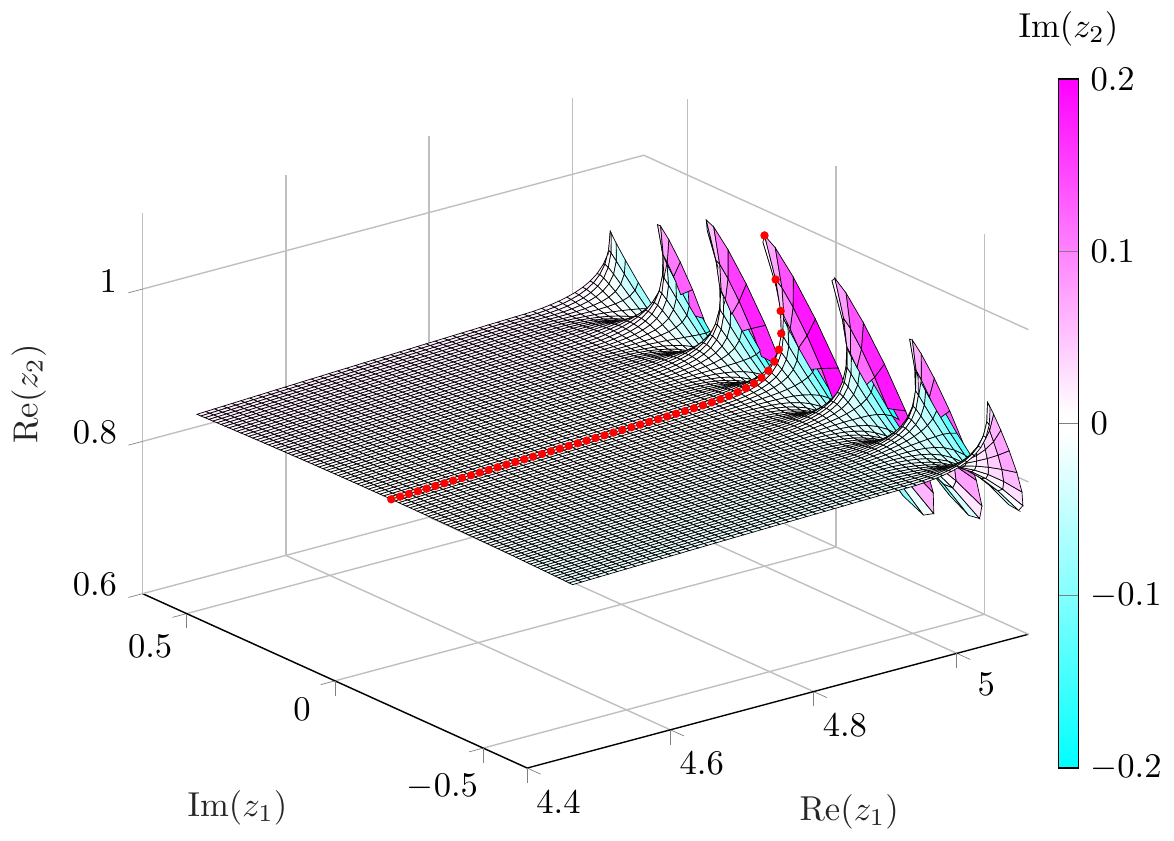}
	\caption{Michaelis-Menten model: Complex time trajectories (Riemann surfaces) with initial point on (left) and off the SIM (right). The red line is the real time trajectory.}%
	\label{fig:mm_surf}
\end{figure*}

\begin{example}[Davis-Skodje model]
	The Davis-Skodje dynamical system solves the differential equation
	\begin{align*}
		\dot z_1 &= -z_1 \\
		\dot z_2 &= -\gamma z_2 + \frac{(\gamma-1)z_1+\gamma z_1^2}{{(1+z_1)}^2}, 
	\end{align*}
	where $\gamma>1$ controls the time scale separation (spectral gap). This model has general solution 
	\begin{align*}
		z_1(t) &= c_1 \e^{-t} \\
		z_2(t) &= \frac{c_1\e^{-t}}{c_1\e^{-t}+1} + c_2 \e^{-\gamma t}. 
	\end{align*}
	An asymptotic expansion in $\varepsilon:=\frac{1}{\gamma}$ provides the SIM:
	\begin{equation*}
		S_\varepsilon=\left\{z\in\R^2: z_2 = \frac{z_1}{1+z_1} \right\}.
	\end{equation*}
	Figure~\ref{fig:ds_surf} reveals the oscillation in imaginary time direction. 
	Obviously, $z_1(\im t)$ is $2\pi$-periodic and $z_2(\im t)$ is a sum of periodic functions with period lengths $2\pi$ and $2\pi/\gamma$, respectively. 
	Using the Euclidean basis vectors $\mathbf{e}_1,\mathbf{e}_2$ and assuming that $c_1\in\R\setminus\{-1,0\}$ the Fourier transform can be written as
	\begin{align*}
		&\F_t [z(\im t)](\xi) = c_1 \updelta(\xi-1) \mathbf{e}_1 \\
		&\qquad+ \left( c_2 \updelta(\xi-\gamma) + \sum_{k=0}^\infty
		\frac{{(-1)}^k}{c_1^k} \updelta(\xi-k) \right) \mathbf{e}_2.
	\end{align*}
	Being on the slow invariant manifold ($c_2=0$) implies the absence of the high frequency in the second component. 
\end{example}

\begin{figure*}[!htbp]
	\centering
	\includegraphics[height=0.35\textwidth]{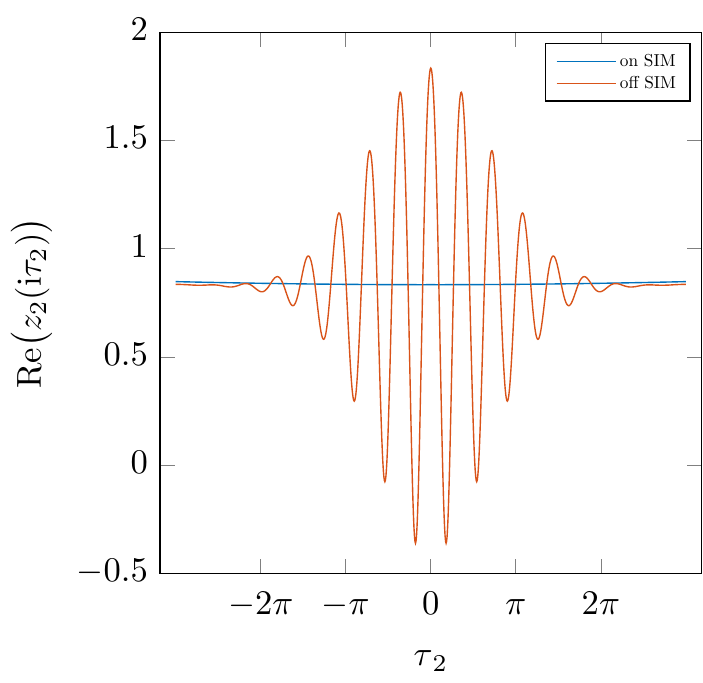}
	\hspace{2cm}
	\includegraphics[height=0.35\textwidth]{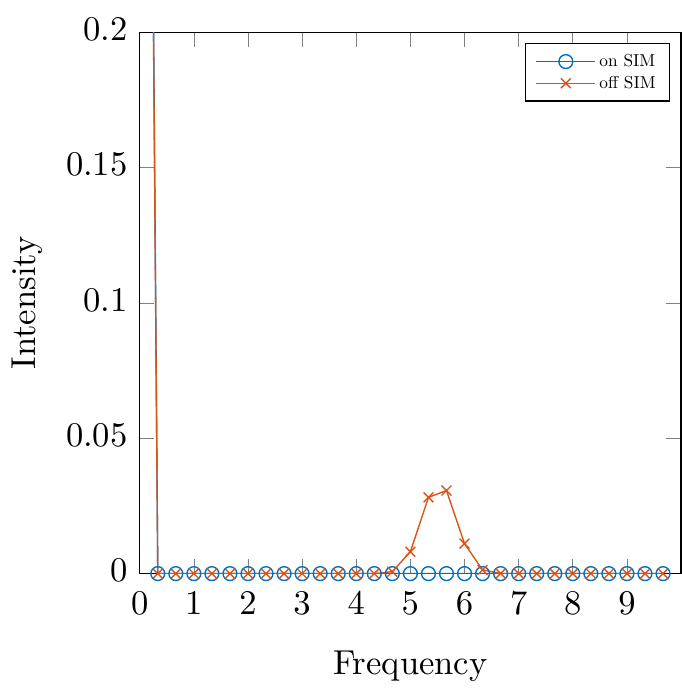}
	\caption{Michaelis-Menten model. Left: Real part of the imaginary time component function of $z_2$ for an initial point on (blue) and off (orange) the SIM\@. 
	Right: Corresponding Fourier spectrum (Fast Fourier Transform using MATLAB) with Dirac delta distribution at 0. 
	The spectrum of the trajectory with initial point off (orange) the SIM shows intensities in a range of continuous high frequencies.}%
	\label{fig:mm_freq}
\end{figure*}

\begin{example}[Michaelis-Menten model]
	\begin{align*}
		\dot z_1 &= \frac{1}{\gamma} \left( -z_1 +z_1 z_2 +\frac{z_2}{2} \right) \\
		\dot z_2 &= z_1 -z_1 z_2 +z_2. 
	\end{align*}
	Again a SIM can be found by asymptotic expansion: 
	\begin{align*}
		S_\varepsilon = \Bigg\{z\in\R^2: z_2=&\frac{z_1}{1+z_1} + \varepsilon \frac{z_1}{2(1+z_1)} \\
		&+ \varepsilon^2 \frac{z_1(1-\frac{5}{2}z_1)}{{2(1+z_1)}^7} + \dots\Bigg\}.
	\end{align*}
	This model is not explicitly integrable and differs in terms of spectral properties from the previous examples. 
	Figure~\ref{fig:mm_surf} shows a (decaying) oscillation in imaginary time. Again, this fast oscillation vanishes for SIM trajectories. \\
	\noindent
	In the numerically determined continuous Fourier spectrum this corresponds to the disappearance of the continuous part in the high-frequency domain (cf.~Figure~\ref{fig:mm_freq}). 
\end{example}

\section{Analytical considerations}
\noindent
The previous results suggest that there is a deep and fundamental connection between the imaginary time oscillation and the growth rate of trajectories in real phase space. 
Locally, this connection is obvious, when considering the first variational equation of problem (\ref{ODE_iwp}):
\begin{equation*}
	\frac{\diff}{\diff \tau_2} w(\im\tau_2) = \im J_{\tilde F}(\tilde z(\im\tau_2))\cdot w(\im\tau_2), 
	w(0) = w_0 \in T_{z_0}\R^n.
\end{equation*}
Exploiting the regularity of the vector field, one gets the approximation
\begin{equation*}
	w(\im\tau_2) \approx \exp(\im\tau_2 J_{\tilde F}(\tilde z(0))) \cdot w(0).
\end{equation*}
This serves as a first-order heuristic argument for imaginary time spectral oscillations.
More ambitious methods as the Magnus expansion~\cite{Magnus1954,Blanes2009} provide better approximations but require more information concerning the imaginary solution trajectory. \\
\noindent
In the following the issue is studied in some more depth using tools from harmonic analysis. 
Consider $z(\cdot)$ is entire in each component. 
A variation of the Schwartz-Paley-Wiener theorem~\cite{Hoermander1990} then implies
\begin{equation*}
	\|z(t)\|_1 \le C{(1+|t|)}^N \e^{\lambda|\Re(t)|}
\end{equation*}
for some constants $C,N,\lambda$ if and only if the support of the imaginary spectrum---in general a distribution---is compact and bounded: 
$\supp \mathcal{F}_t\left[ z_k(\im t) \right] \subset [-\lambda,\lambda] \quad\forall k$. \\
\noindent
Applied to SIM trajectories with entire component functions of exponential type~\cite{Stein2003} (i.e.\ with (less than) exponential growth) this provides a lower boundary of the imaginary time spectrum  w.r.t.\ close trajectories.
This is due to the (heuristic) argument that normal hyperbolicity implies an increased growth rate for trajectories perturbed from the SIM in normal direction. 
A spectrum of bounded support then corresponds to an absence of frequencies with absolute value above the upper bound. 
As a result high frequencies in imaginary time are a sufficient criterion for non-SIM trajectories. 
Therefore, the imaginary time spectrum is a potential indicator for SIM trajectories.

\section{Outlook}
\noindent
We propose that a complex-time view might be of general interest for the analysis of backbone structures in the phase space of real-analytic dynamical systems. 
In particular separatrices which separate qualitatively different topologies of orbit families or stability regions of fixed points.  
Recent work on vector fields generated by number theoretically significant functions like the
Riemann $\zeta$- and $\xi$-function suggests a role of separatrices for the potential locations of fixed points in phase space, especially for Newton flows. The work~\cite{Heitel2019a} analyzes characteristic properties of separatrices and their phase space behavior. 
The latter study is essentially based on real-time dynamical systems but in the outlook we speculate on an enlightening role of a complex-time extension giving rise to structural distinction of Riemann surfaces corresponding to separatrices compared to non-separatrices.

\section*{Acknowledgments}
We gratefully acknowledge the Klaus-Tschira-Stiftung for financial support. Furthermore, the authors thank Marcus Heitel, Johannes Poppe, and Marius M\"uller for discussions on this topic. 

\bibliography{literature.bib}

\end{document}